\renewcommand{\@biblabel}[1]{#1.} % Заменяем библиографию с квадратных скобок на точку:
\DeclareMathOperator{\supp}{supp}
\theoremstyle{plain}
\newtheorem{theorem}{Theorem}[section]
\newtheorem{lemma}[theorem]{Lemma}
\theoremstyle{definition}
\newtheorem{definition}[theorem]{Definition}
\begin{document}

%\renewcommand{\bibname}{Список цитированной литературы}
%\renewcommand\refname{\bibname}
% !!!
% The text starts here

\title{
	The subspace $c_0$ is not complemented in $ac_0$
}

\author{
	N.N. Avdeev
	\footnote{
		This work was carried out at Voronezh State University and supported by the Russian Science
		Foundation grant 19-11-00197.
	}
	\footnote{nickkolok@mail.ru, avdeev@math.vsu.ru}
}

\maketitle

MSC 46B45

\paragraph{Keywords.}
	bounded sequence,
	almost convergent sequence,
	convergent sequence,
	non-complemented subspace

\paragraph{Abstract.}
We prove that the subspace $c_0$ of sequences that converge to zero
is not complemented in the space $ac_0$ of sequences that almost converge to zero.
We proceed with applying the same approach to inclusion chain $c_0\subset A_0 \subset \ell_\infty$.

\section{Introduction}

Let $\ell_\infty$ be the space of all bounded sequences
endowed with the usual norm
\begin{equation}
	\|x\| = \sup_{n\in\mathbb{N}} x_n
	,
\end{equation}
where $\mathbb{N}$ denotes set of all positive integers,
and let $c_0$ be the subspace  of null-sequences.
The famous result of Phillips~\cite{phillips1940linear}
states that $c_0$ is not complemented in $\ell_\infty$:

\begin{theorem}[\cite{phillips1940linear}]
	\label{thm:phillips}
	There is no bounded linear operator $P: \ell_\infty \to c_0$ such that for every
	$x \in c_0$ the equality $Px =x$ holds.
\end{theorem}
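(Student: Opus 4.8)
The plan is to argue by contradiction: suppose a bounded linear projection $P\colon\ell_\infty\to c_0$ with $Px=x$ for every $x\in c_0$ exists, and set $M=\|P\|$. The strategy is to transfer the problem into the language of finitely additive measures on $\mathbb{N}$ and then exploit the tension between the facts that $P$ acts as the identity on the coordinate vectors while its whole range sits inside $c_0$. First I would introduce, for each $n\in\mathbb{N}$, the functional $\phi_n=e_n^*\circ P\in\ell_\infty^*$, where $e_n^*$ denotes evaluation of the $n$-th coordinate; then $\|\phi_n\|\le M$, and each $\phi_n$ corresponds to a bounded finitely additive measure $\mu_n$ on the subsets of $\mathbb{N}$ via $\mu_n(A)=\phi_n(\chi_A)$, with $\chi_A$ the indicator of $A\subseteq\mathbb{N}$.

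Two properties then drive the argument. On one hand, the projection identity $Pe_k=e_k$ forces $\mu_n(\{k\})=\phi_n(e_k)=\delta_{nk}$, so that $\sum_k|\mu_n(\{k\})|=1$ for every $n$. On the other hand, since $P\chi_A\in c_0$ for every $A\subseteq\mathbb{N}$, the $n$-th coordinate $(P\chi_A)_n=\mu_n(A)$ tends to $0$ as $n\to\infty$; that is, $\mu_n(A)\to0$ for each fixed set $A$. These two facts become contradictory once one has Phillips' Lemma: if a bounded sequence $(\mu_n)$ of finitely additive measures on $\mathbb{N}$ satisfies $\mu_n(A)\to0$ for every $A\subseteq\mathbb{N}$, then the atomic masses vanish in the stronger sense $\sum_k|\mu_n(\{k\})|\to0$. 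Granting the lemma, its left-hand side would equal $1$ for all $n$, which is the sought contradiction.

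The real work, and the step I expect to be the main obstacle, is Phillips' Lemma itself. I would prove it by contradiction through a gliding-hump construction: assuming $\sum_k|\mu_n(\{k\})|\not\to0$, pass to a subsequence along which these sums stay above some $\varepsilon>0$, and inductively select disjoint finite blocks of integers $I_1<I_2<\cdots$ together with indices $n_1<n_2<\cdots$ so that each $\mu_{n_j}$ concentrates mass exceeding $\varepsilon/2$ on $I_j$, while its mass on the previously chosen blocks and on a far tail is negligible. The delicate point is that $\mu_{n_j}$ need not be purely atomic, so at each stage one must control the non-atomic part and absorb it into uniformly small error terms; this is exactly where the boundedness of $(\mu_n)$ and the freedom to thin the subsequence are essential. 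Selecting on each block the sign of the dominant contribution and letting $A=\bigcup_j\{k\in I_j:\mu_{n_j}(\{k\})\text{ has that sign}\}$ yields a single set $A$ for which $|\mu_{n_j}(A)|$ stays bounded away from $0$, contradicting $\mu_n(A)\to0$.

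Alternatively, one can run the same combinatorial idea directly with an uncountable almost disjoint family of infinite subsets of $\mathbb{N}$ applied to the vectors $P\chi_{A_\alpha}\in c_0$, but I find the measure-theoretic packaging via Phillips' Lemma the most transparent route to the contradiction, and it isolates the gliding-hump step as the one genuinely hard ingredient.
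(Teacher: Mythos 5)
Your argument is correct in outline, but it takes a genuinely different route from the one the paper relies on. The paper does not prove Theorem~\ref{thm:phillips} at all: it cites Phillips's original 1940 result, and the machinery it actually develops (Lemma~\ref{lem:uncountable_subsets_of_N_with_finite_intersections} together with Lemmas~\ref{lem:c_0_not_complemented_in_ac_0} and~\ref{lem:c_0_not_complemented}) is the Whitley-style argument: take an uncountable almost disjoint family $\{S_i\}$, apply $Q=I-P$, and derive a contradiction from the estimate $\#J/k\leq\|Q\|$ via the counting argument in~\eqref{eq:norm_Q_estimate}. Applying Lemma~\ref{lem:c_0_not_complemented} to $I-P$ immediately yields $\chi_S=P\chi_S\in c_0$ for some infinite $S$, a contradiction; so the paper's toolkit proves Theorem~\ref{thm:phillips} with no measure theory and no gliding hump. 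Your route is the original Phillips one: encode the rows of $P$ as finitely additive measures $\mu_n$, observe that $\sum_k|\mu_n(\{k\})|=1$ while $\mu_n(A)\to0$ for every $A$, and invoke Phillips's Lemma. The reduction of the theorem to that lemma is airtight as you state it. What the measure-theoretic packaging buys is a statement (Phillips's Lemma) of independent interest and wider applicability; what it costs is that the entire difficulty is pushed into the gliding-hump construction, which you sketch only at a high level --- in particular, controlling the mass of $\mu_{n_j}$ on the union of \emph{all} other blocks (not just the previously chosen ones and a far tail) is the point where one normally needs a Rosenthal-type disjointification lemma and repeated thinning of the subsequence, and your two-sentence description of that step would need to be expanded into a real induction before the proof is complete. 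The Whitley/almost-disjoint-family argument, which you mention as an alternative in your last paragraph and which the paper adopts, replaces that analytic work with the elementary cardinality observation that an uncountable index set cannot have all its finite subsets of size at most $\|Q\|k$.
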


Theorem~\ref{thm:phillips} gives the very first example of a non-complemented subspace.
Later, more such subspaces were found;
for further discussion we refer the reader to~\cite{lindenstrauss1979classical}.

The notion of almost convergence is a generalization of the notion of convergence.
Before we proceed to it, we need to introduce the concept of Banach limits,
which (expectably) provide a way to generalize the concept of the limit.

\begin{definition}
	A linear functional $B\in\ell_\infty^*$ is said to be a \emph{Banach limit} if
	\begin{enumerate}[label=(\roman*)]
		\item
			$B\geq0$, that is $Bx \geq 0$ for $x \geq 0$,
		\item
			$B\mathbb{I}=1$, where $\mathbb{I} =(1,1,\ldots)$,
		\item
			$B(Tx)=B(x)$ for $x\in \ell_\infty$,
			where $T$ is the shift operator $T(x_1,x_2,\ldots)=(x_2,x_3,\ldots)$.
	\end{enumerate}
\end{definition}
The set of all Banach limits is denoted by $\mathfrak{B}$.
The existence of Banach limits was announced by Mazur~\cite{Mazur} and proved by Banach~\cite{banach1993theorie}.

\begin{definition}
	A sequence $x\in \ell_\infty$ is said to be \emph{almost convergent} to $t\in \mathbb{R}$
	if for every $B\in\mathfrak{B}$ the equality
	\begin{equation}
		Bx = t
	\end{equation}
	holds.
\end{definition}
The set of all sequences that are almost convergent to $t$ is denoted by $ac_t$;
the space of all almost convergent sequences is denoted by $ac$.

\begin{theorem}[Lorentz,~\cite{lorentz1948contribution}]
	A sequence $x=(x_1,x_2,...)$ is almost convergent to $t\in\mathbb{R}$ iff
	\begin{equation}
		\label{eq:crit_Lorentz}
		\lim_{n\to\infty} \frac{1}{n} \sum_{k=m+1}^{m+n} x_k = t
	\end{equation}
	uniformly by $m\in\mathbb{N}$.
\end{theorem}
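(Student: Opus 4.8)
The plan is to fix a bounded sequence $x$ and prove both implications of~\eqref{eq:crit_Lorentz}, after a harmless reduction to the case $t=0$. Since $B\mathbb{I}=1$ for every $B\in\mathfrak{B}$ and since the window average of the constant sequence $\mathbb{I}$ is identically $1$, replacing $x$ by $x-t\mathbb{I}$ turns the condition $Bx=t$ into $B(x-t\mathbb{I})=0$ and the averages in~\eqref{eq:crit_Lorentz} into the same averages of $x-t\mathbb{I}$. Hence I may assume $t=0$. For $m\in\mathbb{N}\cup\{0\}$ and $n\in\mathbb{N}$ it is convenient to abbreviate the window average by the functional
\begin{equation}
	\phi_{m,n}(z) = \frac1n\sum_{k=m+1}^{m+n} z_k, \qquad z\in\ell_\infty,
\end{equation}
so that each $\phi_{m,n}$ is a positive linear functional on $\ell_\infty$ with $\phi_{m,n}(\mathbb{I})=1$ and $\|\phi_{m,n}\|=1$. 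The uniform-convergence hypothesis is precisely $\sup_{m}|\phi_{m,n}(x)|\to0$ as $n\to\infty$.

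First I would establish the easy implication: if the averages converge to $0$ uniformly in $m$, then $Bx=0$ for every Banach limit $B$. The key observation is that shift invariance gives $B(T^jx)=Bx$ for all $j\ge0$, so by linearity
\begin{equation}
	Bx = B\!\left(\frac1n\sum_{j=0}^{n-1}T^jx\right) \qquad\text{for every } n\in\mathbb{N}.
\end{equation}
The $i$-th coordinate of the averaged sequence $\frac1n\sum_{j=0}^{n-1}T^jx$ equals $\phi_{i-1,n}(x)$, so its supremum norm is exactly $\sup_{m}|\phi_{m,n}(x)|$, which tends to $0$ by hypothesis. Since $\|B\|=B\mathbb{I}=1$, this forces $|Bx|\le\sup_{m}|\phi_{m,n}(x)|\to0$, hence $Bx=0$, i.e.\ $x\in ac_0$.

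For the converse I would argue by contraposition: assuming the averages do \emph{not} converge to $0$ uniformly in $m$, I will construct a Banach limit $B$ with $Bx\neq0$. Negating uniform convergence produces $\varepsilon>0$ and indices $n_i\to\infty$, $m_i$ with $|\phi_{m_i,n_i}(x)|\ge\varepsilon$; as these numbers are bounded by $\|x\|$, after passing to a subsequence I may assume $\phi_{m_i,n_i}(x)\to L$ with $|L|\ge\varepsilon$. Fixing a free ultrafilter $\mathcal{U}$ on $\mathbb{N}$, I define the ultrafilter limit
\begin{equation}
	Bz = \lim_{\mathcal{U}} \phi_{m_i,n_i}(z), \qquad z\in\ell_\infty,
\end{equation}
which exists because each bounded scalar sequence $(\phi_{m_i,n_i}(z))_i$ has an $\mathcal{U}$-limit, and is linear. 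Positivity and $B\mathbb{I}=1$ pass to the limit from the corresponding properties of the $\phi_{m_i,n_i}$. The decisive point is shift invariance, which follows from the asymptotic estimate
\begin{equation}
	\bigl|\phi_{m,n}(Tz)-\phi_{m,n}(z)\bigr| = \frac1n\,|z_{m+n+1}-z_{m+1}| \le \frac{2\|z\|}{n};
\end{equation}
together with $n_i\to\infty$ this gives $\phi_{m_i,n_i}(Tz)-\phi_{m_i,n_i}(z)\to0$, hence $B(Tz)=Bz$. Thus $B\in\mathfrak{B}$, whereas $Bx=L\neq0$, so $x\notin ac_0$.

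The main obstacle is the construction in the converse direction: one must produce a genuine Banach limit out of the window-averaging functionals. This relies on a compactness step (the ultrafilter limit, equivalently a weak-$*$ cluster point of $(\phi_{m_i,n_i})$ furnished by boundedness of $\ell_\infty^*$), and, most importantly, on the observation that the functionals $\phi_{m,n}$ are only \emph{asymptotically} shift invariant, with the defect $O(1/n)$ vanishing in the limit. Verifying that this asymptotic invariance survives the passage to the limit, so that axiom (iii) of a Banach limit holds exactly, is the crux; positivity and unitality are then routine to transfer.
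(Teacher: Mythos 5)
The paper does not prove this theorem at all: it is quoted as a known result of Lorentz and only the reference \cite{lorentz1948contribution} is given, so there is no in-paper argument to compare against. Judged on its own, your proof is correct and complete. The reduction to $t=0$ is legitimate because every $B\in\mathfrak{B}$ satisfies $B\mathbb{I}=1$ and $\phi_{m,n}(\mathbb{I})=1$; the easy direction correctly combines shift invariance, linearity, and $\|B\|=B\mathbb{I}=1$ with the identity that the coordinates of $\frac1n\sum_{j=0}^{n-1}T^jx$ are exactly the window averages; and the converse correctly negates uniform convergence, extracts $\phi_{m_i,n_i}(x)\to L\neq0$ with $n_i\to\infty$, and verifies that the ultrafilter limit of the $\phi_{m_i,n_i}$ is positive, unital, and exactly shift invariant because the shift defect is $O(1/n_i)$. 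The only place worth a word of care is that the condition $n_i\to\infty$ really is forced by the negation of uniform convergence (for each $N$ there are $n\geq N$ and $m$ with $|\phi_{m,n}(x)|\geq\varepsilon$), since shift invariance of the limit functional would fail if the $n_i$ stayed bounded; you use this correctly. For context, Lorentz's original argument reaches the same conclusion by Hahn--Banach applied to the sublinear functional $q(x)=\lim_n\sup_m\phi_{m,n}(x)$, showing that the set of Banach-limit values of $x$ is the interval $[-q(-x),q(x)]$; your ultrafilter (equivalently, weak-$*$ cluster point) construction is an equally standard and somewhat more direct route to the existence of the required Banach limit.
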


Alekhno gave an elegant proof~\cite[Theorem 8]{alekhno2006propertiesII}
that $ac_0$ is not complemented in $\ell_\infty$.
The proof is based on the original Phillips's proof of Theorem~\ref{thm:phillips}
and uses some lemmas from there.
For more details on Banach limits, we refer the reader to~\cite{sucheston1967banach,Eberlein,semenov2020geomBL}.

In Section~\ref{sec:c0_in_ac0} of the present article, we prove that $c_0$ is not complemented in $ac_0$.
So, all the three inclusions
\begin{equation}
	c_0 \subset \ell_\infty,
	\quad
	ac_0 \subset \ell_\infty,
	\quad\mbox{and}\quad
	c_0 \subset ac_0,
\end{equation}
are not complemented.
Our proof is inspired by Whitley's approach~\cite{whitley1968projecting}
and the discussion~\cite{mathSE_Phillips}.

In Sections~\ref{sec:A0_in_ell_infty} and~\ref{sec:c0_in_A0}
we proceed with applying the same approach to the inclusion chain $c_0 \subset A_0 \subset \ell_\infty$,
where
\begin{equation}
	A_0 = \{x\in\ell_\infty: \alpha(x) = 0\}
	,
\end{equation}
\begin{equation}
	\alpha(x) = \varlimsup_{i\to\infty} \max_{i<j\leqslant 2i} |x_i - x_j|
	.
\end{equation}

The function $\alpha(x)$, first introduced in~\cite{our-vzms-2018},
may be considered as a way to characterize \emph{how much non-Cauchy a given bounded sequence is}.
Note that the condition $\alpha(x) =0$ is weaker than the Cauchy condition,
as well as the Lorentz's criterion~\eqref{eq:crit_Lorentz} is obviously weaker than
than the classic definition of the limit.

It has turned out that the functional $\alpha(x)$ is closely related to the theory of Banach limits.
In particular, the equality $A_0\cap ac_0 = c_0$ holds~\cite{avdeev2019space}.
In~\cite{our-ped-2018-alpha-Tx} it is shown that $TA_0 = A_0$ despite the fact that for some $x\in\ell_\infty$
one has $\alpha(Tx)<\alpha(x)$.
For further investigation of $\alpha(x)$ we also refer the reader to~\cite{avdeev2021subsets}.

\section{The subspace $c_0$ is not complemented in $ac_0$}
\label{sec:c0_in_ac0}

The following lemma is a classical result, and we prove it for the sake of completeness.
\begin{lemma}
	\label{lem:uncountable_subsets_of_N_with_finite_intersections}
	There exists an uncountable family of subsets
	$\{S_i\}_{i\in I}$, $S_i \subset \mathbb{N}$,
	such that each $S_i$ is countable and for every $i\neq j$ the intersection $S_i \cap S_j$ is finite.
\end{lemma}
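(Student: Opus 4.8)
The plan is to exhibit an explicit uncountable almost-disjoint family, and the cleanest way I know is to index subsets by real numbers and use a countable ambient set that I later identify with $\mathbb{N}$ via a bijection. First I would replace $\mathbb{N}$ by the countable set $\mathbb{Q}$ of rationals, which is legitimate since any countably infinite set is in bijection with $\mathbb{N}$ and both ``countable'' and ``finite intersection'' are preserved under such a bijection. Then for each irrational $r\in\mathbb{R}$ I would fix a sequence of distinct rationals $(q^{(r)}_n)_{n\in\mathbb{N}}$ converging to $r$, and set
\begin{equation}
	S_r = \{q^{(r)}_n : n\in\mathbb{N}\} \subset \mathbb{Q}.
\end{equation}

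Each $S_r$ is countably infinite by construction, and the index set $I=\mathbb{R}\setminus\mathbb{Q}$ is uncountable, so those two requirements are immediate. The key step is the almost-disjointness: for $r\neq s$ I would argue that $S_r\cap S_s$ is finite. The point is that $S_r$ accumulates only at $r$ and $S_s$ accumulates only at $s$; if the intersection were infinite it would contain a subsequence converging simultaneously to $r$ and to $s$, forcing $r=s$, a contradiction. Concretely, choosing disjoint open neighbourhoods $U\ni r$ and $V\ni s$, all but finitely many terms of $S_r$ lie in $U$ and all but finitely many of $S_s$ lie in $V$, so only finitely many rationals can belong to both sets.

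Finally I would transport this family back to $\mathbb{N}$ through any bijection $\varphi:\mathbb{Q}\to\mathbb{N}$, defining $S'_r=\varphi(S_r)$; injectivity of $\varphi$ guarantees that cardinalities and finiteness of intersections are preserved, yielding the desired family $\{S'_r\}_{r\in I}$ of subsets of $\mathbb{N}$. I do not expect a genuine obstacle here, since this is a standard construction; the only point demanding a little care is making the accumulation-point argument rigorous, that is, verifying that two sequences of rationals tending to distinct limits can share only finitely many terms, which follows at once from the separation of limits in $\mathbb{R}$. An alternative, should one prefer a more combinatorial flavour avoiding topology, is to identify $\mathbb{N}$ with the finite binary strings and assign to each infinite binary sequence the set of its finite prefixes; distinct branches of the binary tree share only finitely many prefixes, giving the same conclusion.
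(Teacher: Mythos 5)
Your proof is correct and follows essentially the same route as the paper: the paper also fixes a bijection $\mathbb{N}\leftrightarrow\mathbb{Q}$ and, for each real $i$, takes $S_i$ to be a sequence of rationals converging (monotonically) to $i$, with almost-disjointness following from uniqueness of limits. You simply spell out the finite-intersection argument in more detail than the paper does, which is fine.
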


\begin{proof}
	Consider a bijection $\mathbb{N} \leftrightarrow \mathbb{Q}$
	and let $I = \mathbb{R}$.
	For $i\in I$ we set $S_i = \{q_n\}$,
	where $\{q_n\} \subset \mathbb{Q}$ is a sequence which converges monotonically to $i$.
\end{proof}

The following lemma is inspired by~\cite{mathSE_Phillips}.

\begin{lemma}
	\label{lem:c_0_not_complemented_in_ac_0}
	For each linear operator $Q: ac_0 \to ac_0$ such that $c_0\subseteq \ker Q$,
	there exists infinite subset $S \subset \mathbb{N}$ such that
	\begin{equation}
		\forall(x \in ac_0 : \supp x \subset S)[Qx = 0]
		.
	\end{equation}
\end{lemma}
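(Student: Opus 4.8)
The plan is to exploit the uncountable
almost-disjoint family $\{S_i\}_{i\in I}$ from
Lemma~\ref{lem:uncountable_subsets_of_N_with_finite_intersections}.
For each $i\in I$ let $e^{S_i}\in ac_0$ be the indicator sequence of
$S_i$ (its characteristic function); since each $S_i$ is a monotone
sequence converging to a point, one checks that its density averages
in~\eqref{eq:crit_Lorentz} tend to zero uniformly, so $e^{S_i}$ indeed
almost converges to zero. I would then consider the images
$Q e^{S_i}\in ac_0$ and argue that for all but countably many indices
$i$ we must have $Qe^{S_i}=0$; the desired $S$ will be one of these
$S_i$ (which is infinite), because any $x\in ac_0$ supported on $S_i$
and $e^{S_i}$ interact well under $Q$ modulo $c_0\subseteq\ker Q$.

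First I would fix an arbitrary coordinate functional, i.e.\ for
$n\in\mathbb{N}$ let $\pi_n\colon ac_0\to\mathbb{R}$ send a sequence to
its $n$-th entry, and study the scalars $\pi_n(Qe^{S_i})$. The key
counting step is a Phillips/Whitley-style argument: I claim that for
each fixed $n$, only countably many $i$ can have $\pi_n(Qe^{S_i})\neq0$,
and moreover one can bound, for any finite set of indices
$i_1,\dots,i_m$ and any choice of signs, the quantity
$\bigl\|\sum_{k}\varepsilon_k e^{S_{i_k}}\bigr\|$ by a constant
(using almost-disjointness, finitely many overlaps are harmless), so
that boundedness of the scalars $\sum_k \varepsilon_k
\pi_n(Qe^{S_{i_k}})$ forces, for each threshold $1/p$, the set
$\{i:|\pi_n(Qe^{S_i})|>1/p\}$ to be finite. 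Taking the union over
$n$ and $p$ shows that the exceptional set
$E=\{i: Qe^{S_i}\neq0\}$ is at most countable. Since $I$ is
uncountable, pick any $i\in I\setminus E$ and set $S=S_i$.

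It remains to upgrade $Qe^{S}=0$ to $Qx=0$ for \emph{every}
$x\in ac_0$ with $\supp x\subset S$. Here I would write such an $x$ as
a combination of $e^{S}$ and a sequence that is eventually small on
$S$: concretely, $x-\lambda e^{S}$ tends to zero along $S$ for a
suitable $\lambda$ (and is supported on $S$), hence lies in $c_0$, so
$Q(x-\lambda e^{S})=0$ by hypothesis, giving
$Qx=\lambda\,Qe^{S}=0$. A cleaner variant is to show that any
$x\in ac_0$ supported on $S$ differs from a scalar multiple of
$e^{S}$ by a null sequence; since $c_0\subseteq\ker Q$ and
$Qe^{S}=0$, linearity finishes the argument.

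The main obstacle I expect is the uniform boundedness estimate in the
counting step: I must control $\bigl\|\sum_k \varepsilon_k
e^{S_{i_k}}\bigr\|$ independently of $m$, which in $\ell_\infty$-norm
is immediate (it is at most the maximal overlap multiplicity), but the
subtlety is that the \emph{functionals} $x\mapsto \pi_n(Qx)$ are only
assumed linear, not bounded, so I cannot directly invoke boundedness
of $Q$. The resolution is to run the Phillips-type argument purely
combinatorially on the coordinates of $Qe^{S_i}$ — for each fixed $n$
the map $i\mapsto \pi_n(Qe^{S_i})$ lands in $\mathbb{R}$, and an
almost-disjointness plus sign-choice argument shows the preimage of
any $\{|t|>1/p\}$ is finite without needing operator bounds — so the
delicate point is verifying that finitely-supported sign combinations
of the $e^{S_{i}}$ stay inside $ac_0$ and that $Q$'s linearity alone
suffices to transfer the finiteness conclusion.
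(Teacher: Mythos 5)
Your counting step is in the right spirit (it is essentially the Whitley/Phillips argument the paper also uses), but the proof breaks at the last stage. The claim that ``any $x\in ac_0$ supported on $S$ differs from a scalar multiple of $e^{S}$ by a null sequence'' is false: if $S$ is sparse enough that indicator-like sequences supported on it lie in $ac_0$, then a sequence taking the values $1,0,1,0,\dots$ along the successive elements of $S$ is in $ac_0\setminus c_0$, and for no $\lambda$ is it equal to $\lambda\chi_S$ plus an element of $c_0$. The space of $ac_0$-sequences supported on $S$ is infinite-dimensional modulo $c_0$, so knowing $Q\chi_{S}=0$ (and $c_0\subseteq\ker Q$) tells you nothing about $Qx$ for general $x$ supported on $S$. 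This is exactly why the paper does not test $Q$ on the indicators: it argues by contradiction, letting the negation of the conclusion hand it, for \emph{each} $i$, some witness $x_i\in ac_0$ with $\supp x_i\subset S_i$ and $Qx_i\neq 0$, and then runs the counting argument on those witnesses. Your choice of test vectors $e^{S_i}$ cannot be upgraded to the full quantifier ``for all $x$ supported on $S$''.

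Two further problems. First, for the family of Lemma~\ref{lem:uncountable_subsets_of_N_with_finite_intersections} as constructed, $\chi_{S_i}$ need not belong to $ac_0$ at all: the fact that $S_i$ enumerates rationals converging to $i$ says nothing about the positions of $S_i$ inside $\mathbb{N}$, which may well have positive density or contain long blocks; the paper explicitly flags that $\chi_S$ may fail to be in $ac_0$ and instead uses a sparse $0$--$1$ sequence dominated by $\chi_S$. (This is repairable by thinning each $S_i$, but your justification as written is a non sequitur.) Second, your claimed ``purely combinatorial'' resolution of the boundedness issue does not work: the finiteness of $\{i:|\pi_n(Qe^{S_i})|>1/p\}$ is deduced precisely by bounding $\pi_n\bigl(Q\bigl(\sum_k\varepsilon_k e^{S_{i_k}}\bigr)\bigr)$ by $\|Q\|\cdot\bigl\|\sum_k\varepsilon_k e^{S_{i_k}}\bigr\|$, so continuity of $Q$ is genuinely used (the paper's estimate~\eqref{eq:norm_Q_estimate} does exactly this, and the lemma is only ever applied to $I-P$ with $P$ continuous). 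Without boundedness, controlling the norm of the sign combinations gives no control on the scalars, and your argument stalls.
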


\begin{proof}
	Notice first that for any infinite subset $S \subset \mathbb{N}$
	there exists $x\in ac_0\setminus c_0$ with $\supp x \subseteq S$.
	Indeed, we can find such $x$ with $0 < x \leq \chi_S$
	that contains infinitely many ones, and the distance between the ones
	is enough for Lorentz's criterion~\eqref{eq:crit_Lorentz} to be hold.

	Let $\{S_i\}_{i \in I}$ be a family of subsets of $\mathbb{N}$
	that satisfy the conditions of Lemma~\ref{lem:uncountable_subsets_of_N_with_finite_intersections}.
	Suppose to the contrary that
	\begin{equation}
		\forall(\mbox{infinite }S\subset\mathbb{N})\exists(x \in ac_0 : \supp x \subset S)[Qx \neq 0]
		.
	\end{equation}
	In particular,
	\begin{equation}
		\forall(i\in I)\exists(x_i \in ac_0 : \supp x_i \subset S_i)[Q(x_i) \neq 0]
		.
	\end{equation}

	Note that $x_i \notin c_0$ because $c_0\subseteq \ker Q$.
	Without loss of generality we can assume that $\|x_i\|=1$ for all $i \in I$.

	Consider $I_n = \{i \in I\,:\,(Qx_i)_n \neq 0\}$,
	then $I = \bigcup\limits_{n\in\mathbb{N}} I_n$.
	Thus, we can find $n$ such that $I_n$ is also uncountable
	(otherwise $I$ would be countable as a countable union of countable sets,
	which contradicts to conditions of Lemma~\ref{lem:uncountable_subsets_of_N_with_finite_intersections}).

	Сonsider now $I_{n,k} = \{i \in I_n\,:\,|(Qx_i)_n| \geq 1/k\}$,
	then $I_n = \bigcup\limits_{k\in\mathbb{N}} I_{n,k}$.
	Applying the same argument as above, one can easily see that the set $I_{n,k}$ is uncountable for some $k$.
	Let us choose such $I_{n,k}$ and proceed with it.

	So, we have an uncountable set $I_{n,k}$ and
	\begin{equation}
		\forall(i\in I_{n,k})\exists(x_i \in ac_0 : \supp x_i \subset S_i)\Bigl[\|x_i\|=1 \mbox{~~and~~} |(Qx_i)_n| \geq 1/k\Bigr]
		.
	\end{equation}

	Consider a finite set $J \subset I_{n,k}$ with $\#J>1$
	(here $\#J$ stands for the cardinality of the set $J$).
	Take
	\begin{equation}
		y = \sum_{j \in J} \operatorname{sign}{(Qx_j)_n} \cdot x_j
		.
	\end{equation}
	Since the intersection $S_i \cap S_j$ is finite for any $i \neq j$ and
	$\supp x_j \subset S_j$,
	the intersection $\bigcap\limits_{j\in J} \supp x_j$ is also finite.
	Hence, $y = f + z$,
	where $\supp f$ is finite and $\|z\| \leq 1$.

	On the other hand,
	\begin{equation}
		\label{eq:non_complemented_sum_cardinality}
		(Qy)_n = \sum_{j \in J}
		(\operatorname{sign}(Qx_j)_n)
		\cdot (Qx_j)_n \geq \frac{\# J}{k}
		.
	\end{equation}
	Note, that $f\in c_0$ and we have $Qf = 0$, because $c_0 \subseteq \ker Q$.
	Thus, $Qy = Q(f+z) = Qf + Qz = Qz$ and
	\begin{equation}
		\label{eq:norm_Q_estimate}
		\frac{\# J}{k} \leq (Qy)_n \leq \|Qy\| = \|Qz\| \leq \|Q\| \cdot \|z\| \leq \|Q\|
		.
	\end{equation}
	Due to~\eqref{eq:norm_Q_estimate}, we obtain $\# J \leq \|Q\| k$ for every $J\subset I_{n,k}$.
	This contradicts the fact that $I_{n,k}$ is uncountable,
	and we are done.
\end{proof}

\begin{theorem}
	The subspace $c_0$ is not complemented in $ac_0$.
\end{theorem}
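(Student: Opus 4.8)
The plan is to derive the theorem from Lemma~\ref{lem:c_0_not_complemented_in_ac_0} by contradiction. Suppose $c_0$ were complemented in $ac_0$; then there exists a bounded linear projection $P: ac_0 \to c_0$ with $Px = x$ for every $x \in c_0$. The natural operator to feed into the lemma is $Q = I - P$, where $I$ is the identity on $ac_0$. Since $P$ is a projection onto $c_0$, for any $x \in c_0$ we have $Qx = x - Px = x - x = 0$, so $c_0 \subseteq \ker Q$. Moreover $Q$ maps $ac_0$ into $ac_0$ (as $P$ lands in $c_0 \subseteq ac_0$ and the difference of two elements of $ac_0$ stays in $ac_0$), so $Q$ satisfies the hypotheses of the lemma.

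Applying Lemma~\ref{lem:c_0_not_complemented_in_ac_0} to $Q$ yields an infinite subset $S \subset \mathbb{N}$ such that $Qx = 0$ for every $x \in ac_0$ with $\supp x \subset S$. Unwinding the definition of $Q$, this means $Px = x$ for all such $x$. The point of the opening remark in the lemma's proof is exactly what is needed next: for any infinite $S$ there exists some $x \in ac_0 \setminus c_0$ with $\supp x \subseteq S$. Choosing this witness $x$, we get on one hand $Px = x$, so $x = Px \in c_0$, while on the other hand $x \notin c_0$ by construction. This contradiction shows that no such projection $P$ can exist.

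The step I expect to require the most care is verifying that $Q = I - P$ genuinely satisfies both hypotheses of the lemma, namely that it is a (bounded) linear self-map of $ac_0$ and that $c_0 \subseteq \ker Q$. The kernel condition is immediate from the projection property, and boundedness follows since $I$ and $P$ are both bounded; the only mild subtlety is confirming that $\operatorname{range} Q \subseteq ac_0$, which holds because $ac_0$ is a linear space containing $c_0$. Everything else is a clean contradiction: the lemma supplies the infinite set $S$, and the preliminary observation in its proof supplies a support-restricted non-null almost-convergent sequence on $S$, which cannot simultaneously be fixed by $P$ and lie outside $c_0$. No heavy computation is involved; the entire weight of the argument rests on the lemma already established.
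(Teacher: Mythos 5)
Your proposal is correct and follows essentially the same route as the paper: apply Lemma~\ref{lem:c_0_not_complemented_in_ac_0} to $Q=I-P$, then use the observation from that lemma's proof that any infinite $S$ supports some $x\in ac_0\setminus c_0$, which $P$ would have to fix. The only difference is that you spell out the verification that $I-P$ satisfies the lemma's hypotheses, which the paper leaves implicit.
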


\begin{proof}
	Suppose to the contrary that
	there exists a continuous projection $P: ac_0 \to c_0$.
	Applying Lemma~\ref{lem:c_0_not_complemented_in_ac_0} to $I-P$
	we can find infinite subset $S\subset\mathbb{N}$
	such that $\forall(x\in ac_0 : \supp x \subset S)[(I-P)x = 0]$
	(such $x \in ac_0 \setminus c_0$ exists even if $\chi_S \notin ac_0$).
	But then $Px\notin c_0$,
	which contradicts the fact that $P$ is a projection onto $c_0$.
\end{proof}

\section{The subspace $A_0$ is not complemented in $\ell_\infty$}
\label{sec:A0_in_ell_infty}

\begin{lemma}
	\label{lem:c_0_not_complemented}
	For each linear operator $Q: \ell_\infty \to \ell_{\infty}$ such that $c_0\subseteq \ker Q$,
	there exists infinite subset $S \subset \mathbb{N}$ such that
	\begin{equation}
		\forall(x \in \ell_\infty : \supp x \subset S)[Qx = 0]
		.
	\end{equation}
\end{lemma}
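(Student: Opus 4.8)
The plan is to follow essentially the same argument as Lemma~\ref{lem:c_0_not_complemented_in_ac_0}, since the structure of that proof did not genuinely depend on staying inside $ac_0$. The only place where the ambient space mattered was the preliminary observation that on any infinite $S$ one can find a norm-one element of the space with support in $S$ lying outside $\ker Q$; in the $\ell_\infty$ setting this is even easier, because we may simply take $\chi_S$ itself (or any $\pm1$-valued sequence supported on $S$), so no delicate Lorentz-type spacing construction is needed. Everything downstream of that observation is purely formal and carries over verbatim.

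Concretely, I would argue by contradiction: assume that for every infinite $S\subset\mathbb{N}$ there is $x$ with $\supp x\subset S$ and $Qx\neq0$. Fix an uncountable almost-disjoint family $\{S_i\}_{i\in I}$ from Lemma~\ref{lem:uncountable_subsets_of_N_with_finite_intersections}, pick for each $i$ a witness $x_i\in\ell_\infty$ with $\|x_i\|=1$, $\supp x_i\subset S_i$, and $Qx_i\neq0$. A double pigeonhole on the uncountable index set then produces a fixed coordinate $n$ and a fixed $k\in\mathbb{N}$ for which the set $I_{n,k}=\{i:|(Qx_i)_n|\geq1/k\}$ remains uncountable, exactly as in the previous lemma (a countable union of countable sets cannot be uncountable).

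The core of the argument is the same cardinality estimate. For any finite $J\subset I_{n,k}$ I would form $y=\sum_{j\in J}\operatorname{sign}((Qx_j)_n)\,x_j$ and write $y=f+z$ where $f$ is finitely supported (using that the pairwise intersections $S_i\cap S_j$ are finite, hence all but finitely many coordinates see at most one summand) and $\|z\|\leq1$. Since $f\in c_0\subseteq\ker Q$ we get $Qy=Qz$, while by construction $(Qy)_n\geq\#J/k$. Combining gives $\#J/k\leq\|Qy\|=\|Qz\|\leq\|Q\|$, so $\#J\leq\|Q\|k$ for \emph{every} finite $J\subset I_{n,k}$, forcing $I_{n,k}$ to be finite and contradicting its uncountability.

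I do not expect a genuine obstacle here, precisely because $\ell_\infty$ is a richer, not a more restrictive, environment than $ac_0$: the one inequality that required care in the $ac_0$ version was producing a non-Cauchy witness inside the smaller space, and that burden simply disappears. The only points worth checking carefully are that $Q$ is bounded (so that $\|Q\|<\infty$ makes the final estimate meaningful) and that the decomposition $y=f+z$ really has $\|z\|\leq1$ — the latter holds because off the finite coordinate set the summands have disjoint supports and each $x_j$ has norm one, so the $\pm1$ signs cannot push any coordinate above $1$ in absolute value. With these routine verifications the proof closes.
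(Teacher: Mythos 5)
Your proof is correct and follows essentially the same route as the paper, which itself simply fixes the almost-disjoint family from Lemma~\ref{lem:uncountable_subsets_of_N_with_finite_intersections}, sets up the contradiction hypothesis, and declares the rest ``similar to'' Lemma~\ref{lem:c_0_not_complemented_in_ac_0}; your write-up is a faithful (and slightly more careful) instantiation of that reduction, including the double pigeonhole and the cardinality estimate $\#J\leq\|Q\|k$. Your remark that the witness construction becomes trivial in $\ell_\infty$ (one may take $\chi_S$ itself) correctly identifies the only point where the $ac_0$ argument needed extra care.
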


\begin{proof}
	Let $\{S_i\}_{i \in I}$ be a family of subsets of $\mathbb{N}$
	such that conditions of Lemma~\ref{lem:uncountable_subsets_of_N_with_finite_intersections} are hold.
	Suppose to the contrary that
	\begin{equation}
		\forall(\mbox{infinite }S\subset\mathbb{N})\exists(x \in \ell_\infty : \supp x \subset S)[Qx \neq 0]
		.
	\end{equation}
	In particular,
	\begin{equation}
		\forall(i\in I)\exists(x_i \in \ell_\infty : \supp x_i \subset S_i)[Q(x_i) \neq 0]
		.
	\end{equation}
	The rest of the proof is similar to that of Lemma~\ref{lem:c_0_not_complemented_in_ac_0}.
\end{proof}

\begin{theorem}
	The subspace $A_0$ is not complemented in $\ell_\infty$.
\end{theorem}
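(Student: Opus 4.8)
The plan is to transport the argument used for ``$c_0$ is not complemented in $ac_0$'' to the present inclusion, with Lemma~\ref{lem:c_0_not_complemented} replacing Lemma~\ref{lem:c_0_not_complemented_in_ac_0}. First I would assume, for contradiction, that there is a continuous projection $P\colon \ell_\infty \to A_0$. Since every $x\in c_0$ is Cauchy and hence satisfies $\alpha(x)=0$, we have $c_0\subseteq A_0$; as a projection onto $A_0$ fixes $A_0$ pointwise, it in particular fixes $c_0$, so that $c_0\subseteq\ker(I-P)$. The operator $Q=I-P$ is bounded (because $P$ is continuous) and maps $\ell_\infty$ into $\ell_\infty$, so it meets the hypotheses of Lemma~\ref{lem:c_0_not_complemented}.

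Applying Lemma~\ref{lem:c_0_not_complemented} to $Q=I-P$ yields an infinite set $S\subset\mathbb{N}$ such that $(I-P)x=0$, i.e.\ $Px=x$, for every $x\in\ell_\infty$ with $\supp x\subseteq S$. As the range of $P$ is $A_0$, each such $x$ must lie in $A_0$, that is $\alpha(x)=0$. To finish it therefore suffices to produce one $x\in\ell_\infty$ with $\supp x\subseteq S$ and $\alpha(x)>0$.

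This construction is the one genuinely new point and the step I expect to demand the most care. Writing $S=\{s_1<s_2<\cdots\}$, I would extract a rapidly growing subsequence $t_1<t_2<\cdots$ of $S$ with $t_{k+1}>2t_k$ (possible since $S$ is infinite) and put $x=\chi_T$ with $T=\{t_k\}_{k\in\mathbb{N}}$, so that $\supp x\subseteq S$. For each $i=t_k$ the index $i+1$ satisfies $i<i+1\leq 2i$ and lies strictly between two consecutive points of the sparse support, hence $x_{i+1}=0$ and $|x_i-x_{i+1}|=1$; consequently $\max_{i<j\leq 2i}|x_i-x_j|\geq 1$ holds along the unbounded sequence $i=t_k$, giving $\alpha(x)\geq 1>0$ and $x\notin A_0$. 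This contradicts the previous paragraph and proves the theorem. The only real obstacle is verifying that an arbitrary infinite $S$ supports such a strongly non-Cauchy sequence: spreading the support so that every doubling window $(t_k,2t_k]$ witnesses a jump of $x$ is precisely what keeps $\alpha(x)$ bounded away from $0$, while the remaining manipulations with $P$, $Q$ and Lemma~\ref{lem:c_0_not_complemented} run exactly parallel to the $ac_0$ case.
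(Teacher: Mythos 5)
Your proposal is correct and follows essentially the same route as the paper: contradiction via Lemma~\ref{lem:c_0_not_complemented} applied to $Q=I-P$, then exhibiting a sequence supported on $S$ that is fixed by $P$ yet lies outside $A_0$. The only cosmetic difference is that you build a sparse set $T$ with $t_{k+1}>2t_k$ to force $\alpha(\chi_T)\geq 1$, whereas the paper takes any $S'\subset S$ whose indicator has infinitely many ones and zeros (so that infinitely many $i\in S'$ have $i+1\notin S'$, which already gives $\alpha(\chi_{S'})=1$); both verifications are valid.
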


\begin{proof}
	Suppose to the contrary that
	there exists a continuous projection $P: \ell_\infty \to A_0$.
	Then $c_0\subset A_0\subset\ker (I-P)$.
	Applying Lemma~\ref{lem:c_0_not_complemented} to $I-P$
	we can find infinite subset $S\subset\mathbb{N}$
	such that $\forall(x\in\ell_\infty : \supp x \subset S)[(I-P)x = 0]$.
	Let $S' \subset S$ be such that $\chi_S$ contains
	infinite quantities of both ones and zeros.
	But then $\alpha(P\chi_{S'}) = \alpha(\chi_{S'}) = 1$ and $P\chi_{S'} \notin A_0$,
	which contradicts the fact  that $P$ is a projection onto $A_0$.
\end{proof}

\section{The subspace $c_0$ is not complemented in $A_0$}
\label{sec:c0_in_A0}

To prove the fact, we need some auxiliary constructions from~\cite{our-vzms-2018}.

Let us define linear operator $F:\ell_\infty \to \ell_\infty$ as following:
\begin{equation}
	\label{operator_F}
	(Fy)_k = y_{i+2}, \mbox{ for } 2^i < k \leq 2^i+1
\end{equation}

For example,
$$
	F(\{1,2,3,4,5,6, ...\}) = \{1,2,\,3,3,\,4,4,4,4,\,5,5,5,5,5,5,5,5,\,6...\}
$$

It is easy to see that the equality
\begin{equation}
	\label{eq:alpha_F}
	\alpha(Fx) = \varlimsup_{k\to\infty} |x_{k+1} - x_{k}|
\end{equation}
holds.

Let us also define linear operator $M:\ell_\infty \to \ell_\infty$ as following:
\begin{multline*}
	M(\omega_1,\omega_2,...)=\left(
		0, 1\omega_1,
		0, \frac{1}{2}\omega_2, 1\omega_2, \frac{1}{2}\omega_2,
		0, \frac{1}{3}\omega_3, \frac{2}{3}\omega_3, 1\omega_3, \frac{2}{3}\omega_3, \frac{1}{3}\omega_3,
		0, ...,
	\right. \\ \left.
		0, \frac{1}{p}\omega_p, \frac{2}{p}\omega_p, ..., \frac{p-1}{p}\omega_p, 1\omega_p,
			\frac{p-1}{p}\omega_p, ..., \frac{2}{p}\omega_p, \frac{1}{p}\omega_p,
		0, \frac{1}{p+1}\omega_{p+1}, ...
	\right)
	.
\end{multline*}
Note that due to~\eqref{eq:alpha_F} we have $FM: \ell_\infty \to A_0$.

\begin{lemma}
	\label{lem:c_0_not_complemented_in_A_0}
	For each linear operator $Q: A_0 \to A_0$ such that $c_0\subseteq \ker Q$,
	there exists infinite subset $S \subset \mathbb{N}$ such that
	\begin{equation}
		\forall(x \in A_0 : \supp x \subset S)[Qx = 0]
	\end{equation}
	and $x\in A_0\setminus c_0$ such that $\supp x \subseteq S$.
\end{lemma}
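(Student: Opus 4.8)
The plan is to run the cardinality argument of Lemma~\ref{lem:c_0_not_complemented_in_ac_0} again, but to be careful about which almost-disjoint family it is applied to. The essential new difficulty is that, in contrast with $\ell_\infty$ or $ac_0$, an arbitrary infinite $S\subset\mathbb{N}$ need not support any nontrivial element of $A_0$: the condition $\alpha(x)=0$ forbids fast oscillation, so a generically sparse support carries only sequences from $c_0$. Hence I cannot feed the bare family $\{S_i\}$ of Lemma~\ref{lem:uncountable_subsets_of_N_with_finite_intersections} into the argument; I must first transport it into $A_0$, and this is exactly what $FM$ is for. Since $FM\colon\ell_\infty\to A_0$, I set $v_i:=FM\chi_{S_i}$ and $T_i:=\supp v_i$, and I intend to run the contradiction on the family $\{T_i\}$.

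First I would record the properties of this auxiliary family. Because $M$ places the $p$-th coordinate of its argument into the $p$-th (finite) ``tent'' block and leaves every block with $p\notin S_i$ equal to zero, the support of $M\chi_{S_i}$ is contained in the union of the blocks indexed by $p\in S_i$; the operator $F$ then spreads each coordinate over a finite block of indices. It follows that each $T_i$ is infinite, and that for $i\neq j$ the set $T_i\cap T_j$ is finite, since it is covered by the finitely many blocks arising from the finite set $S_i\cap S_j$. Moreover $v_i\in A_0$ because $FM$ maps $\ell_\infty$ into $A_0$, and $v_i\notin c_0$ because its tent tops equal $1\cdot\omega_p=1$ for every $p\in S_i$, so $v_i$ attains the value $1$ infinitely often. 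Thus $\{T_i\}_{i\in I}$ is an uncountable almost-disjoint family of infinite sets, and each $T_i$ carries the witness $v_i\in A_0\setminus c_0$.

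With this family in hand I would argue by contradiction as before. Negating the statement of the lemma yields: for every infinite $S$ that supports some element of $A_0\setminus c_0$ there exists $x\in A_0$ with $\supp x\subset S$ and $Qx\neq0$. Applying this to each $T_i$ is legitimate, since $v_i$ certifies that $T_i$ supports a nontrivial $A_0$-element, and it produces a normalized family $x_i\in A_0$, $\supp x_i\subset T_i$, with $Qx_i\neq0$. Since $A_0$ is a linear subspace (from $\alpha(x+x')\le\alpha(x)+\alpha(x')$) the finite sum $y=\sum_{j\in J}\operatorname{sign}(Qx_j)_n\,x_j$ again lies in $A_0$, and the almost-disjointness of the $T_i$ lets me write $y=f+z$ with $f$ finitely supported (hence $f\in c_0\subseteq\ker Q$) and $\|z\|\le1$. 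Then~\eqref{eq:non_complemented_sum_cardinality} and~\eqref{eq:norm_Q_estimate} give $\#J/k\le\|Q\|$, contradicting the uncountability of the chosen $I_{n,k}$. The surviving set $S=T_i$ satisfies both required conclusions, the second being witnessed by $v_i$.

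I expect the genuinely new work to be confined to the middle step: verifying that passing through $FM$ preserves almost-disjointness of supports while delivering nontrivial $A_0$-elements. The final contradiction is essentially a verbatim repetition of the earlier proof, so the main obstacle is the coordinate bookkeeping for $M$ and $F$ — in particular, confirming that each block is finite, that blocks indexed by distinct $p$ occupy disjoint ranges, and that the tent-top values are genuinely carried over by $F$ — which is the place most likely to hide an off-by-one subtlety.
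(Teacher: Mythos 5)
Your proposal is correct and follows essentially the same route as the paper: the paper likewise starts from an almost-disjoint family $\{U_i\}$, pushes it forward to $S_i=\supp FM\chi_{U_i}$, observes that the resulting family is still uncountable with pairwise finite intersections and that $FM\chi_{U_i}\in A_0\setminus c_0$ witnesses the second conclusion, and then repeats the cardinality argument of Lemma~\ref{lem:c_0_not_complemented_in_ac_0}. Your version merely spells out the block-disjointness verification for $F$ and $M$ in more detail than the paper does.
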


\begin{proof}
	Let $\{U_i\}_{i \in I}$ be a family of subsets of $\mathbb{N}$
	such that conditions of Lemma~\ref{lem:uncountable_subsets_of_N_with_finite_intersections} are hold.
	Let $\{S_i\}_{i \in I}$ be a family of subsets of $\mathbb{N}$
	defined by the equality $S_i = \supp FM\chi_{U_i}$.
	Obviously, the family of sets $\{S_i\}_{i \in I}$ also
	satisfies the conditions of Lemma~\ref{lem:uncountable_subsets_of_N_with_finite_intersections}.
	Moreover, for every $i\in I$ we have $x = FM\chi_{U_i} \in A_0\setminus c_0$.

	Suppose to the contrary that
	\begin{equation}
		\forall(\mbox{infinite }S\subset\mathbb{N})\exists(x \in A_0 : \supp x \subset S)[Qx \neq 0]
		.
	\end{equation}
	In particular,
	\begin{equation}
		\forall(i\in I)\exists(x_i \in A_0 : \supp x_i \subset S_i)[Q(x_i) \neq 0]
		.
	\end{equation}
	The rest of the proof is similar to that of Lemma~\ref{lem:c_0_not_complemented_in_ac_0}.

\end{proof}

\begin{theorem}
	The subspace $c_0$ is not complemented in $A_0$.
\end{theorem}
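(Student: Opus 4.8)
The plan is to mirror exactly the structure used for the two previous non-complementedness theorems, now that Lemma~\ref{lem:c_0_not_complemented_in_A_0} has been established with the strengthened conclusion. I would argue by contradiction: suppose there exists a continuous projection $P: A_0 \to c_0$. Since $P$ fixes $c_0$ pointwise, the operator $I-P$ annihilates $c_0$, so $c_0 \subseteq \ker(I-P)$, and $I-P$ maps $A_0$ into $A_0$ (because $Px \in c_0 \subset A_0$ and $x\in A_0$). Thus $I-P$ satisfies the hypotheses of Lemma~\ref{lem:c_0_not_complemented_in_A_0}.

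Next I would apply that lemma to $Q = I-P$ to produce an infinite set $S\subset\mathbb{N}$ such that $(I-P)x = 0$ for every $x\in A_0$ with $\supp x \subset S$. The crucial point is the second conclusion of Lemma~\ref{lem:c_0_not_complemented_in_A_0}, which guarantees that this $S$ actually supports some $x\in A_0\setminus c_0$ — this is precisely why the lemma was stated with the extra clause, and it is what distinguishes this argument from the $\ell_\infty$ case (where $\chi_{S'}$ is available) and the $ac_0$ case. Fixing such an $x$, I would observe that $(I-P)x = 0$ forces $Px = x$, hence $x = Px \in c_0$, contradicting $x\in A_0\setminus c_0$.

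I expect no genuine obstacle here: all the technical work has been pushed into Lemma~\ref{lem:c_0_not_complemented_in_A_0}, whose proof in turn reduces to Lemma~\ref{lem:c_0_not_complemented_in_ac_0} via the operators $F$ and $M$ that manufacture the required family of supports inside $A_0\setminus c_0$. The only point demanding a moment's care is verifying that the witness $x\in A_0\setminus c_0$ with $\supp x\subseteq S$ genuinely exists; but this is handed to us directly by the lemma's statement, so the proof is a short three-line deduction. I would write it as follows.

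\begin{proof}
	Suppose to the contrary that there exists a continuous projection $P: A_0 \to c_0$.
	Since $Px = x$ for every $x\in c_0$, we have $c_0\subseteq\ker(I-P)$,
	and $I-P$ maps $A_0$ into $A_0$.
	Applying Lemma~\ref{lem:c_0_not_complemented_in_A_0} to $I-P$,
	we obtain an infinite subset $S\subset\mathbb{N}$ such that
	$\forall(x\in A_0 : \supp x \subset S)[(I-P)x = 0]$
	together with some $x\in A_0\setminus c_0$ satisfying $\supp x\subseteq S$.
	For this $x$ we then have $(I-P)x = 0$, that is $Px = x \notin c_0$,
	which contradicts the fact that $P$ is a projection onto $c_0$.
\end{proof}
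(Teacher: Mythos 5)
Your proposal is correct and follows essentially the same route as the paper's own proof: apply Lemma~\ref{lem:c_0_not_complemented_in_A_0} to $Q=I-P$, use the lemma's extra clause to obtain a witness $x\in A_0\setminus c_0$ supported in $S$, and derive the contradiction $Px=x\notin c_0$. Your added verification that $c_0\subseteq\ker(I-P)$ and that $I-P$ maps $A_0$ into $A_0$ is a small but welcome bit of explicitness that the paper leaves implicit.
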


\begin{proof}
	Suppose to the contrary that
	there exists a continuous projection $P: A_0 \to c_0$.
	Applying Lemma~\ref{lem:c_0_not_complemented_in_A_0} to $I-P$
	we can find infinite subset $S\subset\mathbb{N}$
	such that $\forall(x\in A_0 : \supp x \subset S)[(I-P)x = 0]$,
	and $x\in A_0 \setminus c_0$ such that $\supp x \subseteq S$.
	But then $Px = x\notin c_0$,
	which contradicts  the fact  that $P$ is a projection onto $c_0$.
\end{proof}

\section{Acknowledgements}
Author thanks Dr. Prof. E.M. Semenov and Dr. A.S. Usachev for encouragement and discussions.

%\printbibliography
%\printbibitembibliography

\end{document}